\documentclass[14pt,a4paper]{article}

\usepackage{mathrsfs}
\usepackage{amscd}
\usepackage{hyperref}
\hypersetup{colorlinks=true,citecolor=green,linkcolor=blue,urlcolor=blue}
\usepackage{tipa}
\usepackage{amssymb}
\usepackage{amsfonts}
\usepackage{stmaryrd}
\usepackage{amssymb}
\usepackage{titling}
\usepackage{CJK}  
\usepackage{indentfirst} 

\usepackage{latexsym}   
\usepackage{bm}         

\usepackage{pifont}
\usepackage{bm}
\usepackage{amsmath,amssymb,amsfonts}
\usepackage{indentfirst}

\usepackage{xcolor}
\usepackage{cases}
\usepackage{wasysym}
\usepackage{amsthm}

%

\newtheorem{theorem}{Theorem}[section]

\newtheorem{remark}[theorem]{Remark}

\CJKtilde   



\begin{document}

\title{\large \textbf{Mod-$p$ maximal compact inductions do not have irreducible admissible subrepresentations}}
\date{}
\author{\textbf{Peng Xu}}
\maketitle

\begin{abstract}
Let $p$ be a prime number. We show in this short note that mod-$p$ maximal compact inductions of a $p$-adic split reductive group do not have irreducible admissible subrepresentations.
\end{abstract}

\section{Introduction}

Let $F$ be a non-archimedean local field of residue characteristic $p$, and $G$ be a $p$-adic split reductive group defined over $F$. Let $K$ be a hyperspecial maximal compact subgroup of $G$. Let $\sigma$ be an irreducible smooth $\overline{\mathbf{F}}_p$-representation of $K$. We show in this short note:

\begin{theorem}\label{main in intro}
The compactly induced representation $\textnormal{ind}^G _K \sigma$ does not have irreducible admissible subrepresentations.
\end{theorem}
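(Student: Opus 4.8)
The plan is to deduce the statement from two structural facts about the Hecke algebra $\mathcal{H}:=\mathrm{End}_G(\textnormal{ind}^G_K\sigma)$ and the tautological $\mathcal{H}$-module $\textnormal{ind}^G_K\sigma$. The first is furnished by the mod-$p$ Satake isomorphism (Herzig; Henniart--Vign\'eras): $\mathcal{H}$ is a commutative, finitely generated $\overline{\mathbf{F}}_p$-algebra, it is an integral domain, and --- because $G$ is noncompact --- it is not finite-dimensional over $\overline{\mathbf{F}}_p$; concretely $\mathcal{H}\cong\overline{\mathbf{F}}_p[\Lambda]$ for a finitely generated submonoid $\Lambda$ of a lattice with $\Lambda$ infinite (for $\mathrm{GL}_n$, $\mathcal{H}\cong\overline{\mathbf{F}}_p[T_1,\dots,T_{n-1},T_n^{\pm1}]$). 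In particular $\mathcal{H}$ is not a field, so all of its maximal ideals are nonzero. The second fact, which I see as the crux, is that $\textnormal{ind}^G_K\sigma$ is a \emph{torsion-free} $\mathcal{H}$-module, i.e.\ every nonzero $\phi\in\mathcal{H}$ acts injectively on $\textnormal{ind}^G_K\sigma$.

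Granting both, I argue by contradiction: suppose $\iota_0\colon\pi\hookrightarrow\textnormal{ind}^G_K\sigma$ is an embedding of an irreducible admissible representation $\pi$. First, $V:=\mathrm{Hom}_G(\pi,\textnormal{ind}^G_K\sigma)$ is finite-dimensional over $\overline{\mathbf{F}}_p$: since $\textnormal{ind}^G_K\sigma$ is a $G$-subrepresentation of the full smooth induction $\mathrm{Ind}^G_K\sigma$, Frobenius reciprocity gives $V\subseteq\mathrm{Hom}_G(\pi,\mathrm{Ind}^G_K\sigma)=\mathrm{Hom}_K(\pi|_K,\sigma)$, and this last space is finite-dimensional because $\pi$ is admissible and $\sigma$ is finite-dimensional. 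Next, $\mathcal{H}$ acts on $V$ by post-composition, and $V\neq 0$ since $\iota_0\in V$; as $\mathcal{H}$ is commutative, its image in $\mathrm{End}_{\overline{\mathbf{F}}_p}(V)$ is a nonzero finite-dimensional commutative algebra, so a simple submodule of $V$ over this image has an annihilator that pulls back to a maximal ideal $\mathfrak{m}\subsetneq\mathcal{H}$ annihilating every vector of that submodule. Pick a nonzero such vector $\iota$. Because $\mathcal{H}$ is not a field we have $\mathfrak{m}\neq 0$; fix $0\neq\phi\in\mathfrak{m}$. Then $\phi\circ\iota=0$, so $\iota(\pi)\subseteq\ker(\phi)$, whereas $\ker(\phi)=0$ by torsion-freeness; hence $\iota(\pi)=0$, which is impossible since $\iota\neq 0$ and $\pi$ is irreducible (so $\iota$ is injective). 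Thus $\textnormal{ind}^G_K\sigma$ has no irreducible admissible subrepresentation. I would append the remark that this is consistent with $\textnormal{ind}^G_K\sigma$ having irreducible admissible \emph{quotients} --- e.g.\ supersingular representations, which arise as quotients by maximal ideals of $\mathcal{H}$ --- the point being exactly that no such ideal can be realised on a submodule.

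The work lies in the torsion-freeness of $\textnormal{ind}^G_K\sigma$ over $\mathcal{H}$ (the commutativity and monoid-algebra shape of $\mathcal{H}$ being by now standard). I would establish it by filtering both $\mathcal{H}$ and $\textnormal{ind}^G_K\sigma$ by support along the Cartan decomposition $G=\coprod_\lambda K\lambda(\varpi)K$ --- the union over dominant cocharacters $\lambda$, with $\varpi$ a uniformizer of $F$: on the associated graded objects the Hecke operators act by degree shifts whose leading terms are explicit and nonzero (read off from the mod-$p$ Satake transform), so $\mathrm{gr}(\textnormal{ind}^G_K\sigma)$ is torsion-free over the domain $\mathrm{gr}(\mathcal{H})$; comparing top-degree components then lifts this to torsion-freeness of $\textnormal{ind}^G_K\sigma$ over $\mathcal{H}$ itself. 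For $\mathrm{GL}_2$ over $\mathbf{Q}_p$ this is the injectivity of the operators $T-\lambda$ due to Barthel--Livn\'e, and the general case is the same pattern with the explicit Satake transform. Everything else above is a soft count: admissibility forces the multiplicity space $\mathrm{Hom}_G(\pi,\textnormal{ind}^G_K\sigma)$ to be finite-dimensional, hence to carry an eigenvector for the commutative algebra $\mathcal{H}$ that is killed by a nonzero Hecke operator, contradicting torsion-freeness.
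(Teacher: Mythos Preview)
Your overall architecture --- commutative spherical Hecke algebra, a finite-dimensional multiplicity space on which it acts so that an eigenvector exists, then a contradiction with torsion-freeness/injectivity of nonzero Hecke operators --- is exactly the paper's. The torsion-freeness you isolate as the crux is precisely what the paper invokes from Herzig (injectivity of nonzero elements of $\mathrm{Hom}_G(\textnormal{ind}^G_K\sigma_1,\textnormal{ind}^G_K\sigma)$ under right composition with $\mathcal{H}(K,\sigma_1)$), and your Cartan-filtration sketch is the right idea.

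There is, however, a genuine gap in your finite-dimensionality step. You assert that $\mathrm{Hom}_K(\pi|_K,\sigma)$ is finite-dimensional ``because $\pi$ is admissible and $\sigma$ is finite-dimensional''. Admissibility controls $K'$-\emph{invariants}, not $K'$-\emph{coinvariants}: writing $K_1\trianglelefteq K$ for the pro-$p$ radical (so $\sigma$ factors through $K/K_1$), one has $\mathrm{Hom}_K(\pi,\sigma)=\mathrm{Hom}_{K/K_1}(\pi_{K_1},\sigma)$, and there is no general reason in characteristic $p$ for $\pi_{K_1}$ to be finite-dimensional when $\pi$ is admissible. (Over $\overline{\mathbf{F}}_p$ the natural map $\pi^{K_1}\to\pi_{K_1}$ is typically far from an isomorphism.) To my knowledge the finite-dimensionality of $\pi_{K_1}$ for irreducible admissible $\pi$ is not established for general split $G$, so this step cannot be taken for granted. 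Without it you do not get an eigenvector in $V=\mathrm{Hom}_G(\pi,\textnormal{ind}^G_K\sigma)$, and the argument stalls.

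The paper sidesteps this by reversing the direction: it picks an irreducible $\sigma_1\subset\pi|_K$ and works with $\mathrm{Hom}_G(\textnormal{ind}^G_K\sigma_1,\pi)\cong\mathrm{Hom}_K(\sigma_1,\pi)$, which \emph{is} manifestly finite-dimensional (any $K$-map lands in $\pi^{K_1}$). The Hecke algebra acting is then $\mathcal{H}(K,\sigma_1)$ rather than $\mathcal{H}(K,\sigma)$, and after composing with the inclusion $\iota$ one obtains a nonzero element of $\mathrm{Hom}_G(\textnormal{ind}^G_K\sigma_1,\textnormal{ind}^G_K\sigma)$ killed on the right by a nonzero $T\in\mathcal{H}(K,\sigma_1)$; the contradiction comes from the same injectivity/torsion-freeness input you identified. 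If you reroute your argument through $\mathrm{Hom}_K(\sigma_1,\pi)$ instead of $\mathrm{Hom}_K(\pi,\sigma)$, everything else you wrote goes through unchanged.
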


\section{Proof of Theorem \ref{main in intro}}

Let $\sigma$ be an irreducible smooth $\overline{\mathbf{F}}_p$-representation of $K$.

\begin{proof}[Proof of Theorem \ref{main in intro}]

Assume $\pi$ is an irreducible admissible $\overline{\mathbf{F}}_p$-representation of $G$ contained in $\textnormal{ind}^G _K \sigma$, i.e., we are given a $G$-embedding
\begin{center}
$\iota: \pi \hookrightarrow \textnormal{ind}^G _K \sigma$.
\end{center}
Take an irreducible smooth $\overline{\mathbf{F}}_p$-representation $\sigma_1$ of $K$ contained in $\pi$. By Frobenius reciprocity, we get a non-zero $G$-map $\theta$ in the space $\text{Hom}_G (\textnormal{ind}^G _K \sigma_1, \pi)$, which will be denoted by $H(\sigma_1, \pi)$. Since $\pi$ is admissible, the space $H(\sigma_1, \pi)$ is finite dimensional. By composition, the space $H(\sigma_1, \pi)$ is a right module over the spherical Hecke algebra $\mathcal{H} (K, \sigma_1): =\text{End}_G (\textnormal{ind}^G _K \sigma_1)$. As $G$ is split, the algebra $\mathcal{H} (K, \sigma)$ is commutative (\cite[Corollary 1.3]{Her2011a}). Therefore we may replace $\theta$ by an eigenvector. That is to say, there is a character $\chi: \mathcal{H} (K, \sigma_1)\rightarrow \overline{\mathbf{F}}_p$ so that $\theta$ is annihilated by the kernel of $\chi$.

\smallskip

Now under our assumption, the composition $\iota\circ \theta$ is a non-zero map in $\text{Hom}_G (\textnormal{ind}^G _K \sigma_1, \textnormal{ind}^G _K \sigma)$. We take a non-zero $T$ in the kernel of $\chi$. We get
\begin{center}
$(\iota\circ \theta)\circ T =\iota \circ (\theta\circ T)=0$.
\end{center}
As $T$ and $\iota\circ \theta$ are both non-zero, we get a contradiction by the argument of \cite[Corollary 6.5]{Her2011b}.
\end{proof}

\begin{remark}
Note that we have assumed $\pi$ is admissible in the theorem. However, in certain cases such an assumption is not necessary, say $G= GL_2/ U(2, 1)$, as in both cases a weaker substitute, i.e., the existence of Hecke eigenvalues (\cite{B-L94}, \cite{X2018a}), is available. Note also that Daniel Le proved recently that there are non-admissible irreducible mod-$p$ smooth representations of $GL_2 (\mathbf{Q}_{p^3})$ (\cite{Le2018}).
\end{remark}

\begin{remark}
One interest of Theorem \ref{main in intro} is to compare it with the \textbf{complex} case: when $G$ is a classical group, a complex maximal compactly induced representation of $G$ might often happen to be irreducible.
\end{remark}

\bibliographystyle{amsalpha}
\bibliography{new}

\end{document}